\newtheorem{thrm}{Theorem}[section]
\newtheorem{lem}[thrm]{Lemma}
\theoremstyle{definition}
\numberwithin{equation}{section}
\author{Zhang Lin, Zhu Jun and Wu Junde}
\address{
Zhang Lin\newline\indent Department of Mathematics\\Zhejiang
University\\Hangzhou\newline\indent People's Republic of China}
\email{godyalin@163.com;linyz@zju.edu.cn}
\address{
Zhu Jun\newline\indent Institute of Mathematics\\Hangzhou Dianzi
University\\Hangzhou\newline\indent People's Republic of China}
\address{
Wu Junde\newline\indent Department of Mathematics\\Zhejiang
University\\Hangzhou\newline\indent People's Republic of China}
\begin{document}

\title[All-derivable points in nest algebras]
{All-derivable points in nest algebras}

\keywords{All-derivable point; nest algebra; derivable linear
mapping; derivation}
\thanks{This work is supported by the National Natural Science Foundation of China (No. 10771191).}

\begin{abstract}

Suppose that $\mathscr{A}$ is an operator algebra on a Hilbert space
$H$. An element $V$ in $\mathscr{A}$ is called an all-derivable
point of $\mathscr{A}$ for the strong operator topology if every
strong operator topology continuous derivable mapping $\varphi$ at
$V$ is a derivation. Let $\mathscr{N}$ be a complete nest on a
complex and separable Hilbert space $H$. Suppose that $M$ belongs to
$\mathscr{N}$ with $\{0\}\neq M\neq\ H$ and write $\widehat{M}$ for
$M$ or $M^{\bot}$. Our main result is: for any $\Omega\in
alg\mathscr{N}$ with $\Omega=P(\widehat{M})\Omega P(\widehat{M})$,
if $\Omega |_{\widehat{M}}$ is invertible in
$alg\mathscr{N}_{\widehat{M}}$, then $\Omega$ is an all-derivable
point in $alg\mathscr{N}$ for the strong operator topology.

\end{abstract}
\maketitle
\section{Introduction}
Let $K$ and $H$ be complex and separable Hilbert spaces of
dimensions greater than one. $B(K,H)$ and $F(K,H)$ stand for the set
of all bounded linear operators and the set of all finite rank
operators from $K$ into $H$, respectively. When $H=K$, $B(K,H)$ and
$F(K,H)$ are abbreviated to $B(H)$ and $F(H)$, respectively. The
adjoint operator of $T$ is denoted by $T^{*}$. Suppose $x\in K$ and
$y\in H$. The rank one operator $<\cdot,x>y$, from $K$ into $H$, is
denoted by $y\otimes x$. If $\mathscr{N}$ is a complete \emph{nest}
on $H$, then the \emph{nest algebra} $alg\mathscr{N}$ is the Banach
algebra of all bounded linear operators which leave every member of
$\mathscr{N}$ invariant. For $N\in \mathscr{N}$, $N_{-}$ stands for
$\vee\{M\in\mathscr{N}:M\subset N\}$, and $\mathscr{N}_N$ stands for
the nest $\{L\cap N: L\in\mathscr{N}\}$ in $N$. We write $P(N)$ for
the orthogonal projection operator from $H$ onto $N$. The identity
of $B(N)$ is denoted by $I_{N}$ and the restriction of an operator
$T\in B(H)$ to the subspace $N$ is denoted by $T|_{N}$.\\

Suppose that $\mathscr{A}$ is a subalgebra of $B(H)$ and $V$ is an
operator in $\mathscr{A}$. A linear mapping $\varphi$ from
$\mathscr{A}$ into itself is called a \emph{derivable mapping} at
$V$ if $\varphi(ST)=\varphi(S)T+S\varphi(T)$ for any $S,T$ in
$\mathscr{A}$ with $ST=V$. Operator $V$ is called an
\emph{all-derivable point} in $\mathscr{A}$ for the strong operator
topology if every strong operator topology continuous derivable
mapping $\varphi$ at $V$ is a derivation.\\

In recent years the study of all-derivable points in operator
algebras has attracted many researchers' attentions. Jing, Lu, and
Li \cite{Jing} proved that every derivable mapping $\varphi$ at $0$
with $\varphi(I)=0$ on nest algebras is a derivation. Li, Pan, and
Xu \cite{Pan} showed that every derivable mapping $\varphi$ at $0$
with $\varphi(I)=0$ on CSL algebras is a derivation. Zhu and Xiong
proved the following results in \cite{Zhu1,Zhu2,Zhu3,Zhu6,Zhu7}: 1)
every norm-continuous generalized derivable mapping at $0$ on some
CSL algebras is a generalized derivation; 2) every invertible
operator in nest algebras is an all-derivable point for the strong
operator topology; 3) $V$ is an all-derivable point in
$\mathscr{TM}_n$ if and only if $V\neq 0$, where $\mathscr{TM}_n$ is
the algebras of all $n\times n$ upper triangular matrices; and 4)
every orthogonal projection operator $P(M)(\{0\}\neq
M\in\mathscr{N})$ is an all-derivable point in nest algebra
$alg\mathscr{N}$ for the strong operator topology.\\

The main purpose of this paper is to study the all-derivable points
in nest algebras. Suppose that $M$ belongs to $\mathscr{N}$ with
$\{0\}\neq M\neq\ H$ and write $\widehat{M}$ for $M$ or $M^{\bot}$.
We shall prove: for any $\Omega\in alg\mathscr{N}$ with
$\Omega=P(\widehat{M})\Omega P(\widehat{M})$, if $\Omega
|_{\widehat{M}}$ is invertible in $alg\mathscr{N}_{\widehat{M}}$,
then $\Omega$ is an all-derivable point in $alg\mathscr{N}$ for the
strong operator topology.

\section{Three lemmas}
It is known that every operator $S$ in $B(H)$ can be uniquely
expressed in the form of a $2\times 2$ operator matrix relative to
the orthogonal decomposition $H=M\oplus M^{\bot}$. Thus we
immediately get the following
proposition.\\
\textbf{Proposition 2.0} \emph{Let $\mathscr{N}$ be a complete nest
on a complex and separable Hilbert space $H$. For an arbitrary $M$
in $\mathscr{N}$ with $\{0\}\neq M\neq H$, we have}
$$alg\mathscr{N}=\left\{\left[\begin{array}{ccc}
              X & Y\\
              0 & Z
       \end{array}
 \right]: X\in alg\mathscr{N}_{M}, Z\in alg\mathscr{N}_{M^{\bot}}, Y\in B(M^{\bot},M)\right\}.$$
The following three lemmas will be used to prove the main result of
this paper in Section \ref{sect3}.\\

\begin{lem}\label{l1} Let $H$ be a complex and separable Hilbert space and let
$\mathscr{N}$ be a complete nest in $H$. Suppose that $\delta$ is a
strong operator topology continuous linear mapping from
$alg\mathscr{N}$ into itself and $\Gamma$ is an invertible operator
in $alg\mathscr{N}$. If the following equation
\begin{eqnarray}
\delta(\Gamma)=\delta(\Gamma S_{1})S_{2}+\Gamma
S_{1}\delta(S_{2})\label{eqn2.1}
\end{eqnarray}
holds for any $S_{1},S_{2}$ in $alg\mathscr{N}$ with $S_{1}S_{2}=I$,
then $\delta$ is an inner derivation.
\end{lem}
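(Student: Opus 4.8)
The plan is to recognize first that equation~(\ref{eqn2.1}) is just the statement that $\delta$ is derivable at the invertible point $\Gamma$: given any $S,T\in alg\mathscr{N}$ with $ST=\Gamma$, put $S_1=\Gamma^{-1}S$ (which lies in $alg\mathscr{N}$ because nest algebras are inverse-closed) and $S_2=T$; then $S_1S_2=I$ and (\ref{eqn2.1}) becomes $\delta(ST)=\delta(S)T+S\delta(T)$, while conversely any pair with $S_1S_2=I$ produces the factorization $(\Gamma S_1)(S_2)=\Gamma$. Taking $S_1=\Gamma^{-1}$, $S_2=\Gamma$ already gives $\delta(\Gamma)=\delta(I)\Gamma+\delta(\Gamma)$, hence $\delta(I)=0$. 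Next I would run a one-parameter perturbation: for fixed $X\in alg\mathscr{N}$ and real $t$ near $0$ the element $I+tX$ is invertible in the Banach algebra $alg\mathscr{N}$, so applying (\ref{eqn2.1}) with $S_2=I+tX$, $S_1=(I+tX)^{-1}$ and expanding both sides to first order in $t$ (using linearity of $\delta$ and norm-continuity of the factors) and comparing the coefficients of $t$ yields the $\Gamma$-twisted Leibniz rule $\delta(\Gamma X)=\delta(\Gamma)X+\Gamma\delta(X)$ for all $X$. Combining this with the factorization $\Gamma=(\Gamma A^{-1})(A)$ gives $\delta(A^{-1})=-A^{-1}\delta(A)A^{-1}$ for invertible $A$, and then, using that $X+\lambda I$ is invertible for $|\lambda|$ large and that $\delta(I)=0$, the companion identity $\delta(X\Gamma)=\delta(X)\Gamma+X\delta(\Gamma)$ for all $X$.

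The crux is to promote these one-sided identities to the full Leibniz rule $\delta(AB)=\delta(A)B+A\delta(B)$. This cannot be done by formal manipulation alone (when $\Gamma$ is a scalar the twisted identities are vacuous), so here the nest structure must be used. Assuming $\mathscr{N}$ nontrivial (otherwise $alg\mathscr{N}=B(H)$ and the assertion is classical), fix a nontrivial $N\in\mathscr{N}$ and use Proposition~2.0 to work with $2\times2$ block matrices relative to $H=N\oplus N^{\bot}$. Since $\Gamma$ is invertible its diagonal blocks are invertible, so in $alg\mathscr{N}$ it factors as a unipotent upper-triangular operator times a block-diagonal invertible one. Feeding block-structured factorizations of $\Gamma$, and perturbations of their individual corners, back into (\ref{eqn2.1}), together with the twisted identities, one computes $\delta$ on the off-diagonal bimodule $B(N^{\bot},N)$ and on the rank-one operators $y\otimes x$ lying in $alg\mathscr{N}$, and verifies $\delta(AB)=\delta(A)B+A\delta(B)$ whenever one of $A,B$ is a finite-rank operator of $alg\mathscr{N}$. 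Because such finite-rank operators are strong-operator dense in $alg\mathscr{N}$ and $\delta$ is strong-operator continuous, this extends to all of $alg\mathscr{N}$, so $\delta$ is a derivation; alternatively this entire step can be replaced by a direct appeal to the quoted Zhu--Xiong theorem that every invertible operator of a nest algebra is an all-derivable point for the strong operator topology.

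Finally, every derivation of a nest algebra is inner (it is automatically continuous and spatial, implemented by an operator which, after subtracting a scalar, may be taken in $alg\mathscr{N}$); hence $\delta=\mathrm{ad}_T$ for some $T\in alg\mathscr{N}$, which is the claim. I expect the block-structure step — converting the $\Gamma$-twisted Leibniz identities into a genuine derivation identity, and in particular controlling the strong-operator limits that let one pass from finite-rank operators to the whole algebra (uniform boundedness of the relevant nets) — to be the main obstacle.
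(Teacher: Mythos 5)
Your proposal is correct in substance, but it diverges from the paper at both technical junctures and overcomplicates the finish. Your opening observation --- that Eq.~(\ref{eqn2.1}) is precisely the statement that $\delta$ is derivable at the invertible point $\Gamma$ (via $S=\Gamma S_{1}$, $T=S_{2}$) --- is right, and it means your fallback of citing the quoted Zhu--Xiong result that invertible operators are all-derivable points is already a complete proof; the lemma \emph{is} that theorem. The paper deliberately does not do this: it quotes only the weaker fact that a derivable mapping at $I$ with $\delta(I)=0$ is an inner derivation \cite{Zhu2}, and the content of its proof is the reduction to that case. Concretely, after $\delta(I)=0$ the paper proves the twisted Leibniz rule $\delta(\Gamma R)=\delta(\Gamma)R+\Gamma\delta(R)$ by taking $S_{1}=I-aP$, $S_{2}=I-bP$ with $a+b=ab=1$ for an idempotent $P$, symmetrizing in $a$ and $b$, and then upgrading from idempotents to all of $alg\mathscr{N}$ via Hadwin \cite{Hadwin}, Erd\"os \cite{Erdos}, and SOT continuity. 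Your first-order expansion with $S_{2}=I+tX$, $S_{1}=(I+tX)^{-1}$ reaches the same identity more directly and avoids the rank-one machinery; it is legitimate once you note that SOT--SOT continuity forces $\delta$ to be norm-bounded, so the $O(t^{2})$ remainder is controlled. Where you lose the thread is afterwards: substituting the twisted rule back into Eq.~(\ref{eqn2.1}) gives $\Gamma[\delta(S_{1})S_{2}+S_{1}\delta(S_{2})]=0$, hence $\delta(S_{1})S_{2}+S_{1}\delta(S_{2})=0$ whenever $S_{1}S_{2}=I$, i.e.\ $\delta$ is derivable at $I$ --- no block-matrix analysis, no computation of $\delta$ on rank-one operators, and no fresh density argument is required. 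The program you flag as ``the main obstacle'' is exactly the content of the known result on derivability at the unit, and your sketch of it is not carried out; the proposal stands only because of the citation route, which short-circuits the lemma rather than reproving it.
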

\begin{proof} Put $S_{1}=S_{2}=I$ in Eq.~(\ref{eqn2.1}), we have
$S_{1}S_{2}=I$. It follows that $\Gamma\delta(I)=0$. That is,
$\delta(I)=0$ since $\Gamma$ is invertible in $alg\mathscr{N}$. Put
$S_{1}=I-aP$ and $S_{2}=I-bP$ in Eq.~(\ref{eqn2.1}), where $P$ is an
idempotent in $alg\mathscr{N}$ and $a,b$ are two complex numbers
such that $a+b=ab=1$. Thus we get that $S_{1}S_{2}=I$. Furthermore,
we have
\begin{eqnarray*}
\delta(\Gamma)&=&\delta(\Gamma -a\Gamma P)(I-bP)+(\Gamma -a\Gamma
P)\delta(I-bP)\\
&=&[\delta(\Gamma)-b\delta(\Gamma)P-a\delta(\Gamma
P)+ab\delta(\Gamma P)P]+[\Gamma\delta(I)-b\Gamma\delta(P)-a\Gamma
P\delta(I)+ab\Gamma P\delta(P)].
\end{eqnarray*}
It follows from $\delta(I)=0$ that
\begin{eqnarray}
a\delta(\Gamma P)+b[\delta(\Gamma)P+\Gamma\delta(P)]=\delta(\Gamma
P)P+\Gamma P\delta(P).\label{eqn2.2}
\end{eqnarray}
Interchanging the position of $a$ and $b$ in Eq.~(\ref{eqn2.2}), we
have
\begin{eqnarray}
b\delta(\Gamma P)+a[\delta(\Gamma)P+\Gamma\delta(P)]=\delta(\Gamma
P)P+\Gamma P\delta(P).\label{eqn2.3}
\end{eqnarray}
It follows from Eq.~(\ref{eqn2.2}) and Eq.~(\ref{eqn2.3}) that
$\delta(\Gamma P)=\delta(\Gamma)P+\Gamma\delta(P)$. Notice that
every rank-one operator in $alg\mathscr{N}$ may be written as a
linear combination of at most four idempotents in $alg\mathscr{N}$
(see \cite{Hadwin}) and every finite rank operator in
$alg\mathscr{N}$ may be represented as a sum of rank-one operators
in $alg\mathscr{N}$ (see \cite{Erdos}). Thus we obtain
\begin{eqnarray}
\delta(\Gamma F)=\delta(\Gamma)F+\Gamma\delta(F)\label{eqn2.4}
\end{eqnarray}
for any $F$ in $F(H)\cap alg\mathscr{N} $. By applying Erd\"{o}s
Density Theorem (see \cite{Erdos}) to Eq.~(\ref{eqn2.4}), we obtain
that $\delta(\Gamma R)=\delta(\Gamma)R+\Gamma\delta(R)$ for any $R$
in $alg\mathscr{N}$. In particular,
\begin{eqnarray}
\delta(\Gamma
S_{1})=\delta(\Gamma)S_{1}+\Gamma\delta(S_{1}).\label{eqn2.5}
\end{eqnarray}
It follows from Eq.~(\ref{eqn2.1}) and Eq.~(\ref{eqn2.5}) that
$\Gamma[\delta(S_{1})S_{2}+S_{1}\delta(S_{2})]=0$. Thus we get that
$\delta(S_{1})S_{2}+S_{1}\delta(S_{2})=0$ for any $S_{1},S_{2}$ in
$alg\mathscr{N}$ with $S_{1}S_{2}=I$ since $\Gamma$ is invertible in
$alg\mathscr{N}$. Note that $\delta(I)=0$, and so $\delta$ is a
derivable mapping at $I$. It follows that $\delta$ is an inner
derivation (see \cite{Zhu2}).
\end{proof}
\begin{lem}\label{l2} Let $K,H$ be two complex and separable Hilbert spaces with
dimensions greater than one, and let $\mathscr{N}$ and
$\mathscr{N'}$ be two complete nests in $H$ and $K$, respectively.
Suppose that $\varphi$ is a strong operator
topology continuous linear mapping from $B(K,H)$ into itself. \\~\\
(1) If $\ T\varphi(S)=0$ for any $T$ in $alg\mathscr{N}$ and $S$ in
$B(K,H)$ with $TS=0$, then there exists an operator $D$ in $B(K)$
such that $$\varphi(S)=SD$$ for any
$S$ in $B(K,H)$.\\~\\
(2) If $\ \varphi(S)T'=0$ for any $T'$ in $alg\mathscr{N'}$ and $S$
in $B(K,H)$ with $ST'=0$, then there exists an operator $D'$ in
$B(H)$ such that $$\varphi(S)=D'S$$ for any $S$ in $B(K,H)$.
\end{lem}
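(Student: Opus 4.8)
\emph{Reduction to rank-one operators.} The plan is to prove part (1) in full and then obtain part (2) from the same circle of ideas with the two sides interchanged. For (1): since $\varphi$ is strong-operator continuous, since every $S\in B(K,H)$ is a strong-operator limit of finite rank operators, since each finite rank operator in $B(K,H)$ is a finite sum of rank-one operators $y\otimes x$, and since $S\mapsto SD$ is itself strong-operator continuous, it suffices to produce a single $D\in B(K)$ with $\varphi(y\otimes x)=(y\otimes x)D$ for every $y\in H$ and $x\in K$. So fix $y\neq 0$ and $x\in K$. For $T\in alg\mathscr{N}$ one has $T(y\otimes x)=(Ty)\otimes x$, so $T(y\otimes x)=0$ exactly when $Ty=0$; the hypothesis then gives $T\varphi(y\otimes x)=0$ whenever $Ty=0$, whence
$$\operatorname{ran}\varphi(y\otimes x)\ \subseteq\ \mathcal{Q}_y\ :=\ \bigcap\{\ker T:\ T\in alg\mathscr{N},\ Ty=0\}.$$

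\emph{The main obstacle.} The heart of the argument is the identity $\mathcal{Q}_y=\mathbb{C}y$ for every $0\neq y\in H$. Using $T=P(N)$ and $T=I-P(N)$ ($N\in\mathscr{N}$) one immediately gets $\mathcal{Q}_y\subseteq N_y$, where $N_y=\wedge\{N\in\mathscr{N}:y\in N\}$; for the reverse inclusion one must, for each $z\notin\mathbb{C}y$, produce $T\in alg\mathscr{N}$ with $Ty=0$ and $Tz\neq 0$, and I would argue by cases. If $z\notin N_y$, take $T=I-P(N_y)$. If $z\in N_y$, one may compress everything to $N_y$ (every element of $alg\mathscr{N}_{N_y}$ extends to $alg\mathscr{N}$ via $T_0\mapsto T_0P(N_y)$) and so assume $N_y=H$. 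If the nest has a jump at the top, let $A=H\ominus H_-$ be the top atom; then $y$ has a nonzero component in $A$, the operators of block form ``identity on $H_-$, a finite rank map $A\to H_-$, zero on $A$ otherwise'' lie in $alg\mathscr{N}$, and one of them kills $y$ but not $z$ (treating separately whether the $A$-components of $y$ and $z$ are proportional, and if so replacing $z$ by $z-\lambda y\in H_-$). If the nest is continuous at the top, one instead separates $y$ from $z$ by an integral (Volterra-type) operator belonging to $alg\mathscr{N}$ and built from a bounded functional that vanishes at $y$ but not at $z$. Granting $\mathcal{Q}_y=\mathbb{C}y$, the operator $\varphi(y\otimes x)$ has range in the one-dimensional space $\mathbb{C}y$, hence $\varphi(y\otimes x)=y\otimes h(y,x)$ for a unique $h(y,x)\in K$.

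\emph{Extraction of $D$ and conclusion of (1).} Linearity of $\varphi$ gives that $x\mapsto h(y,x)$ is linear; comparing $\varphi\big((y_1+y_2)\otimes x\big)$ with $\varphi(y_1\otimes x)+\varphi(y_2\otimes x)$ and using uniqueness of rank-one decompositions (here $\dim H>1$ enters) shows $h(y,x)$ does not depend on $y$, so we may write $h(x)$, with $h:K\to K$ linear. For boundedness, fix a unit vector $y_0\in H$; if $x_n\to x$ and $h(x_n)\to w$ in norm, then $y_0\otimes x_n\to y_0\otimes x$ in norm, hence strongly, so $y_0\otimes h(x_n)=\varphi(y_0\otimes x_n)\to\varphi(y_0\otimes x)=y_0\otimes h(x)$ strongly, while also $y_0\otimes h(x_n)\to y_0\otimes w$; uniqueness of strong limits forces $h(x)=w$, so $h$ has closed graph and is bounded. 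Put $D=h^{*}\in B(K)$; then $(y\otimes x)D=y\otimes D^{*}x=y\otimes h(x)=\varphi(y\otimes x)$, and by the reduction at the start $\varphi(S)=SD$ for all $S\in B(K,H)$.

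\emph{Part (2).} Here left and right are interchanged. For rank-one $S=y\otimes x$ with $y\neq 0$ and $T'\in alg\mathscr{N'}$ one has $(y\otimes x)T'=y\otimes (T')^{*}x$, which vanishes exactly when $(T')^{*}x=0$, i.e.\ when $\operatorname{ran}T'\subseteq\{x\}^{\perp}$; the hypothesis then gives $\operatorname{ran}T'\subseteq\ker\varphi(y\otimes x)$ for all such $T'$, so
$$\ker\varphi(y\otimes x)\ \supseteq\ \bigvee\big\{\operatorname{ran}T':\ T'\in alg\mathscr{N'},\ (T')^{*}x=0\big\}.$$
Since $\{(T')^{*}:T'\in alg\mathscr{N'}\}$ is the nest algebra $alg\{N^{\perp}:N\in\mathscr{N'}\}$ on $K$, the right-hand side equals $\big(\bigcap\{\ker A:\ A\in alg\{N^{\perp}:N\in\mathscr{N'}\},\ Ax=0\}\big)^{\perp}$, which by the main obstacle (applied to that nest and to the vector $x$) equals $(\mathbb{C}x)^{\perp}=\{x\}^{\perp}$. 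Hence $\varphi(y\otimes x)$ kills $\{x\}^{\perp}$, so $\varphi(y\otimes x)=g(y,x)\otimes x$ for a unique $g(y,x)\in H$; arguing exactly as in the previous paragraph (linearity in $y$, independence of $x$, closed graph) produces $D'\in B(H)$ with $\varphi(y\otimes x)=(D'y)\otimes x=D'(y\otimes x)$, and by the same reduction (now using that $S\mapsto D'S$ is strong-operator continuous) $\varphi(S)=D'S$ for all $S$. The step I expect to be genuinely hard is the identity $\mathcal{Q}_y=\mathbb{C}y$ in full generality; everything else is bookkeeping.
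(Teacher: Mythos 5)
Your architecture is sound and, for the bookkeeping parts, essentially the paper's: reduce to rank-ones by SOT-density, show $\operatorname{ran}\varphi(y\otimes x)\subseteq\mathbb{C}y$, extract $h$ by linearity and independence of $y$, get boundedness from the closed graph theorem, and obtain (2) by passing to adjoints. You have also correctly isolated the crux, namely $\mathcal{Q}_y=\mathbb{C}y$, and your handling of $z\notin N_y$, the compression to $N_y$, and the atom-at-the-top case are all fine. The genuine gap is exactly where you flag it: the sub-case where the compressed nest is continuous at the top of $N_y$. The proposed device --- ``an integral (Volterra-type) operator built from a bounded functional that vanishes at $y$ but not at $z$'' --- is not an argument, and taken literally it fails: a rank-one $u\otimes v$ lies in $alg\mathscr{N}$ only when $v\in (N_-)^{\perp}$ for some $N$ containing $u$, so you cannot feed in an arbitrary separating functional. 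What actually works is to range over all $N\in\mathscr{N}$ with $\{0\}\neq N\subsetneq N_y$, use the rank-ones $u\otimes v$ with $u\in N$, $v\in N^{\perp}$ to conclude $P(N^{\perp})z\in\mathbb{C}P(N^{\perp})y$ (here $P(N^{\perp})y\neq 0$ because $N_y$ is minimal), check that the scalars agree along the nest, and deduce $z-\lambda y\in\bigcap\{N:\{0\}\neq N\subsetneq N_y\}$. That intersection need not be $\{0\}$; when it is a nonzero bottom atom $N_0$ you need one more step (every bounded operator with range in $N_0$ lies in $alg\mathscr{N}$, and since $y\notin N_0$ a rank-one of this kind separates $y$ from any nonzero $z-\lambda y\in N_0$). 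None of this is in your sketch, and the consistency-of-scalars plus residual-atom issues are precisely the points where a one-line gesture is not enough.

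For comparison, the paper avoids formulating $\mathcal{Q}_y=\mathbb{C}y$ at all. It cases on the bottom of the nest: if $\{0\}_+\neq\{0\}$, the rank-ones $u\otimes v$ with $u$ in the bottom atom and $v$ arbitrary in $H$ already lie in $alg\mathscr{N}$ and give $\operatorname{ran}\varphi(x\otimes g)\subseteq\mathbb{C}x$ for every $x$ simultaneously; if $\{0\}_+=\{0\}$, it establishes the rank-one form only for $x$ in the members of an increasing sequence $N_n$ with $P(N_n)\to I$ strongly and then uses the SOT-continuity of $\varphi$ to pass to the limit. Your case split (on the structure of the nest at $N_y$ rather than at $\{0\}$) is a legitimately different and arguably more transparent organization, but as written the hardest branch of it is asserted rather than proved, so the proposal is incomplete until the argument of the previous paragraph (or an equivalent) is supplied.
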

\begin{proof} We only prove (1). One can prove (2) similarly.\\

\emph{Case 1.} Suppose that $\{0\}_{+}\neq \{0\}$. For any $x$ in
$H$ and $g$ in $K$, it is clear that $x\otimes g$ is in $B(K,H)$.
For an arbitrary vector $z$ in $\{x\}^{\bot}(\subset
H=\{0\}^{\bot})$ and $y$ in $\{0\}_{+}$ with $y\neq 0$, $y\otimes z$
in $alg\mathscr{N}$ and $(y\otimes z)(x\otimes g)=0$. Under the
hypothesis, we get that $y\otimes z\varphi(x\otimes g)=0$. Thus
there exists a vector $\omega_{x,g}$ in $K$ such that
$\varphi(x\otimes g)=x\otimes \omega_{x,g}.$ It follows that
$$\varphi(x\otimes (f+g))=x\otimes \omega_{x,f+g}.$$ for any $f,g$ in $K$. Moreover, $$\varphi(x\otimes (f+g))=\varphi(x\otimes f)+\varphi(x\otimes
g)=x\otimes \omega_{x,f}+x\otimes \omega_{x,g}.$$ Thus
$\omega_{x,f+g}=\omega_{x,f}+\omega_{x,g}.$ Similarly, we obtain
that $\omega_{x,\lambda g}=\lambda\omega_{x,g}$. Consequently, there
exists a linear mapping $L_{x}$ from $K$ into $K$ such that
$\varphi(x\otimes g)=x\otimes L_{x}g$. Furthermore, we have
$$(x+v)\otimes L_{x+v}g=\varphi((x+v)\otimes g)=\varphi(x\otimes g)+\varphi(v\otimes g)=x\otimes L_{x}g+v\otimes L_{v}g$$
for any $v$ in $H$. So $x\otimes (L_{x+v}-L_{x})g+v\otimes
(L_{x+v}-L_{v})g=0$, which implies that $L_{x}=L_{v}$ when $x$ and
$v$ are linearly independent. If $x$ and $v$ are linearly dependent,
there exists some complex number $t$  such that $v=tx$. Since
$dimH>1$, a vector $u$ can be chosen from $H$ such that $u$ and $x$
are linearly independent. Thus $L_{x}=L_{u}=L_{v}$ since $u$ and $v$
are linearly independent. This implies that $L_{x}$ is independent
of $x$ for any $x\in H$. If we write $L=L_{x}$, then
$\varphi(x\otimes g)=x\otimes Lg$ for any $x$ in $H$ and $g$ in $K$.
Next we shall prove that $L$ is in $B(K)$. In fact, for arbitrary
sequence $(g_{n})$ in $K$ with $g_{n}\to g$ and $Lg_{n}\to h$, we
have $x\otimes Lg_{n}=\varphi(x\otimes g_{n})\to \varphi(x\otimes
g)=x\otimes Lg$. So $x\otimes (Lg-h)=0$, namely $Lg=h$. Therefore
$L$ is a closed operator. By the Closed Graph Theorem, we obtain
that $L$ is a bounded linear operator on $K$. Since $L\in B(K)$,
$\varphi(x\otimes g)=x\otimes Lg=x\otimes gL^{*}$. We write $D$ as
$L^{*}$. So $\varphi(x\otimes g)=x\otimes gD$. Furthermore, we have
$$\varphi(FS)=FSD$$ for any $S$ in $B(K,H)$ and finite rank operator
$F$ in $alg\mathscr{N}$. Since $\varphi$ is a strong operator
topology continuous linear mapping, it follows from Erd\"{o}s
Density Theorem that $\varphi(S)=SD$ for any $S$ in $B(K,H)$.\\

\emph{Case 2.} Suppose that $\{0\}_{+}=\{0\}$. Then there exists a
sequence $\{N_{n}\}$ in $\mathscr{N}$ such that the following
statements hold: 1) $N_{1}^{\bot}\supsetneq N_{2}^{\bot}\supsetneq
\cdots \supsetneq N_{j}^{\bot}\supsetneq N_{j+1}^{\bot}\supsetneq
\cdots \supsetneq \{0\};$ 2) $\{P(N_{n}^{\bot})\}$ strongly
converges to $0$ as $n\rightarrow+\infty$. It is obvious that
$$N_{1}\subsetneq N_{2}\subsetneq\cdots\subsetneq N_{j}\subsetneq N_{j+1}\subsetneq\cdots\subsetneq
H$$ and the sequence $\{P(N_{n})\}$ strongly converges to the unit
operator $I_{H}$ as $n\rightarrow+\infty$. For an arbitrary integer
$n$ and $x$ in $N_{n}$, by imitating the proof of case 1, we can
find a linear mapping $D_{N_{n}}$ on $K$ such that $\varphi(x\otimes
g)=x\otimes gD_{N_{n}}$ for any $x$ in $N_{n}$ and $g$ in $K$. Note
that $N_{n}\subseteq N_{m}$ $(m>n)$ and $\varphi(x\otimes
g)=x\otimes gD_{N_{m}}$ for any $x$ in $N_{m}$ and $g$ in $K$. So
$x\otimes gD_{N_{n}}=x\otimes gD_{N_{m}}$ for any $x\in N_{n}$ and
$g\in K$. It follows that $D_{N_{n}}=D_{N_{m}}$. Hence $D_{N_{n}}$
is independent of $N_{n}$. We write $D$ as $D_{N_{n}}$. Thus
$\varphi(x\otimes g)=x\otimes gD$ for any $x$ in $N_{n}$ and g in
$K$. For any $x$ in $H$, put $x_{n}=P(N_{n})x$. Then we get that
$$\varphi(x_{n}\otimes g)=x_{n}\otimes gD.$$ That is,
$$\varphi(P(N_{n})x\otimes g)=P(N_{n})x\otimes gD.$$ Since $\varphi$
is a strong operator topology continuous linear mapping and
$P(N_{n})$ strongly converges to $I_{H}$ as $n\rightarrow+\infty$,
taking limit on both sides in the above equation, we obtain that
$\varphi(x\otimes g)=x\otimes gD$ for any $x$ in $H$ and $g$ in $K$.
The rest of the proof is similar to case 1. The lemma is proved.
\end{proof}
\begin{lem}\label{l3} Let $\mathscr{A}$ be an unital subalgebra of $B(H)$, where H is
a complex and separable Hilbert space. Suppose that $\phi$ is a
linear mapping from $\mathscr{A}$ into itself. If $\phi$ vanishes at
every invertible operator in $\mathscr{A}$, then $\phi$ vanishes on
$\mathscr{A}$.
\end{lem}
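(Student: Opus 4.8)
The plan is to show that every element of $\mathscr{A}$ can be written as a linear combination of operators that are invertible in $\mathscr{A}$; once this is done, linearity of $\phi$ together with the hypothesis immediately forces $\phi$ to vanish everywhere.

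First I would record the trivial observation that the unit $I$ is invertible in $\mathscr{A}$ (its inverse being $I$ itself), so $\phi(I)=0$, and hence $\phi(\lambda I)=\lambda\phi(I)=0$ for every complex scalar $\lambda$. Next, fix an arbitrary $A\in\mathscr{A}$ and choose a complex number $\lambda$ with $|\lambda|>\|A\|$. Then $\lambda I-A=\lambda(I-\lambda^{-1}A)$ with $\|\lambda^{-1}A\|<1$, so the Neumann series $\sum_{n\ge 0}\lambda^{-n}A^{n}$ converges in operator norm to an element of $\mathscr{A}$ and furnishes an inverse of $I-\lambda^{-1}A$ lying inside $\mathscr{A}$; consequently $\lambda I-A$ is invertible in $\mathscr{A}$, and therefore $\phi(\lambda I-A)=0$ by hypothesis. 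Writing $A=\lambda I-(\lambda I-A)$ and using linearity of $\phi$ gives $\phi(A)=\phi(\lambda I)-\phi(\lambda I-A)=0-0=0$. Since $A\in\mathscr{A}$ was arbitrary, $\phi$ vanishes on all of $\mathscr{A}$.

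There is essentially no serious obstacle in this argument; the only point that requires a little care is the distinction between an operator being invertible in $B(H)$ and being invertible in $\mathscr{A}$, which is exactly why I pass to the regime $|\lambda|>\|A\|$, where the convergent Neumann series keeps the inverse inside $\mathscr{A}$ (this uses that $\mathscr{A}$, being a nest algebra in the intended application, is norm closed). One could equally well decompose $A=\tfrac12\big[(\lambda I+A)-(\lambda I-A)\big]$ with both summands invertible in $\mathscr{A}$ for $|\lambda|>\|A\|$ and run the same computation; either way the lemma follows in a couple of lines.
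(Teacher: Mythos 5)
Your proof is correct and follows essentially the same route as the paper: choose $|\lambda|>\|A\|$ so that $\lambda I-A$ is invertible in $\mathscr{A}$, then use linearity together with $\phi(I)=0$. The only difference is that you explicitly justify, via the Neumann series and norm-closedness, that the inverse lies in $\mathscr{A}$ — a point the paper simply asserts — which is a welcome extra precision but not a different argument.
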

\begin{proof} We only need to prove that $\phi(T)=0$ for any operator
$T$ in $\mathscr{A}$. Take a complex number $\lambda$ with
$|\lambda|
>\|T\|$. It follows that $\lambda I-T$
is invertible in $\mathscr{A}$. We thus see that $\phi(\lambda
I-T)=0$ by the hypothesis. Thus we have $\phi(T)=\lambda\phi(I)$ by
the linearity of $\phi$. So $\phi(T)=0$ for any $T$ in
$\mathscr{A}$.
\end{proof}

\section{All-derivable points in $alg\mathscr{N}$}\label{sect3}

In this section, we always assume that $M$ belongs to $\mathscr{N}$
with $\{0\}\neq M\neq\ H$, and write $\widehat{M}$ for $M$ or
$M^{\bot}$. Throughout the rest of this paper, every upper
triangular $2\times 2$ operator matrix relative to the orthogonal
decomposition $H=M\oplus M^{\bot}$ always
stands for the element of nest algebra $alg\mathscr{N}$. The unit operator on $M$ is denoted by $I_{M}$. The following theorem is our main result.

\begin{thrm}\label{t1} Let $\mathscr{N}$ be a complete nest on a complex and
separable Hilbert space $H$. Suppose that $M$ belongs to
$\mathscr{N}$ with $\{0\}\neq M\neq\ H$ and write $\widehat{M}$ for
$M$ or $M^{\bot}$. For any $\Omega\in alg\mathscr{N}$ with
$\Omega=P(\widehat{M})\Omega P(\widehat{M})$, if $\Omega
|_{\widehat{M}}$ is invertible in $alg\mathscr{N}_{\widehat{M}}$,
then $\Omega$ is an all-derivable point in $alg\mathscr{N}$ for the
strong operator topology.
\end{thrm}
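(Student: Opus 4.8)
The plan is to realise everything in the $2\times2$ block form given by Proposition~2.0 relative to $H=M\oplus M^{\perp}$ and then to strip off inner derivations one step at a time. I treat the case $\widehat{M}=M$; the case $\widehat{M}=M^{\perp}$ follows by the mirror argument, with the two diagonal blocks interchanged. Writing $P=P(M)$, the hypotheses $\Omega=P\Omega P$ and ``$\Omega|_{M}$ invertible in $alg\mathscr{N}_{M}$'' say exactly that $\Omega=\left[\begin{smallmatrix}\Omega_{0}&0\\0&0\end{smallmatrix}\right]$ with $\Omega_{0}=\Omega|_{M}$ invertible in $alg\mathscr{N}_{M}$. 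Let $\varphi$ be an SOT-continuous linear map on $alg\mathscr{N}$ which is derivable at $\Omega$. At any point in the argument I am free to replace $\varphi$ by $X\mapsto\varphi(X)-(TX-XT)$ for a fixed $T\in alg\mathscr{N}$, since such a map is again SOT-continuous and derivable at $\Omega$; the aim is to reduce $\varphi$ to a map that visibly obeys the Leibniz rule.

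\emph{The invertible corner.} For $S_{1},S_{2}\in alg\mathscr{N}_{M}$ with $S_{1}S_{2}=I_{M}$ one has $\left[\begin{smallmatrix}\Omega_{0}S_{1}&0\\0&0\end{smallmatrix}\right]\left[\begin{smallmatrix}S_{2}&0\\0&0\end{smallmatrix}\right]=\Omega$, and reading off the $(1,1)$-block of the derivability identity shows that $\delta(A):=\bigl(\varphi(\left[\begin{smallmatrix}A&0\\0&0\end{smallmatrix}\right])\bigr)|_{M}$ is an SOT-continuous linear map on $alg\mathscr{N}_{M}$ satisfying Eq.~(\ref{eqn2.1}) with $\Gamma=\Omega_{0}$. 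By Lemma~\ref{l1}, $\delta=\operatorname{ad}_{D_{1}}$ for some $D_{1}\in alg\mathscr{N}_{M}$, and subtracting the inner derivation implemented by $\left[\begin{smallmatrix}D_{1}&0\\0&0\end{smallmatrix}\right]$ I may assume $\varphi$ kills the $(1,1)$-block of the $M$-corner. Next I feed the further elementary factorizations $\Omega I=\Omega$, $\left[\begin{smallmatrix}\Omega_{0}&Y\\0&0\end{smallmatrix}\right]P=\Omega$, $P\left[\begin{smallmatrix}\Omega_{0}&0\\0&C\end{smallmatrix}\right]=\Omega$ and $\Omega\left[\begin{smallmatrix}I_{M}&0\\0&C\end{smallmatrix}\right]=\Omega$ into the derivability identity and compare blocks; this forces $\varphi(I)$ into the $(2,2)$-corner, $\varphi(P)$ into the $(1,2)$-corner, and gives explicit formulas for $\varphi$ on the $(1,2)$- and $(2,2)$-corners in terms of $V_{0}:=\varphi(P)\in B(M^{\perp},M)$ and of the compressions $k$ of $\varphi$ to $B(M^{\perp},M)$ and $\theta$ of $\varphi$ to $alg\mathscr{N}_{M^{\perp}}$.

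\emph{The master factorization.} The central device is that $\mathcal{S}\mathcal{T}=\Omega$ whenever
$$\mathcal{S}=\left[\begin{array}{cc}A&B\\0&C_{1}\end{array}\right],\qquad\mathcal{T}=\left[\begin{array}{cc}A^{-1}\Omega_{0}&-A^{-1}BC_{2}\\0&C_{2}\end{array}\right],$$
with $A\in alg\mathscr{N}_{M}$ invertible, $B\in B(M^{\perp},M)$, and $C_{1},C_{2}\in alg\mathscr{N}_{M^{\perp}}$ such that $C_{1}C_{2}=0$. Expanding $\varphi(\Omega)=\varphi(\mathcal{S})\mathcal{T}+\mathcal{S}\varphi(\mathcal{T})$ block by block and specialising the parameters yields: the $(2,2)$-blocks of $\varphi$ on the $M$- and $(1,2)$-corners vanish; $\varphi(\left[\begin{smallmatrix}A&0\\0&0\end{smallmatrix}\right])=\left[\begin{smallmatrix}0&AV_{0}\\0&0\end{smallmatrix}\right]$, first for invertible $A$ and then for all $A\in alg\mathscr{N}_{M}$ by Lemma~\ref{l3}. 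Subtracting a further inner derivation implemented by an element of the $(1,2)$-corner built from $V_{0}$, I may now assume $\varphi$ annihilates the $M$-corner and $P$, so that
$$\varphi\!\left(\left[\begin{array}{cc}A&B\\0&C\end{array}\right]\right)=\left[\begin{array}{cc}0&k(B)\\0&\theta(C)\end{array}\right];$$
what then remains of the derivability identity is that $\theta$ is derivable at $0$ on $alg\mathscr{N}_{M^{\perp}}$, that $k(AB)=Ak(B)$ for all $A\in alg\mathscr{N}_{M}$ (for invertible $A$ from the master identity, then for all $A$ by Lemma~\ref{l3}; alternatively one verifies the annihilator hypothesis of Lemma~\ref{l2} and reads off $k(B)=BD$), and the mixed relation $k(BC)=k(B)C+B\theta(C)$ for $B\in B(M^{\perp},M)$, $C\in alg\mathscr{N}_{M^{\perp}}$.

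\emph{Conclusion.} Setting $C=I_{M^{\perp}}$ in $k(BC)=k(B)C+B\theta(C)$ gives $B\theta(I_{M^{\perp}})=0$ for every $B\in B(M^{\perp},M)$, hence $\theta(I_{M^{\perp}})=0$; being derivable at $0$ and annihilating the identity, $\theta$ is a derivation of $alg\mathscr{N}_{M^{\perp}}$ by the theorem of Jing, Lu and Li \cite{Jing}. A routine block computation, using ``$\theta$ a derivation'', $k(AB)=Ak(B)$ and $k(BC)=k(B)C+B\theta(C)$, then verifies $\varphi(XY)=\varphi(X)Y+X\varphi(Y)$ for all $X,Y\in alg\mathscr{N}$, and reinstating the inner derivations subtracted along the way shows the original $\varphi$ is a derivation, so $\Omega$ is an all-derivable point. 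I expect the main obstacle to be exactly this last passage: since the $(2,2)$-block of $\Omega$ is zero, derivability at $\Omega$ only constrains $\theta$ through products in $alg\mathscr{N}_{M^{\perp}}$ that vanish, so by itself it only gives ``$\theta$ derivable at $0$'', which is strictly weaker than being a derivation; it is the mixed relation coupling $k$ and $\theta$ that both supplies the missing normalisation $\theta(I_{M^{\perp}})=0$ and forces the off-diagonal part of $\varphi$ to be that of a derivation. A secondary nuisance is bookkeeping --- tracking which partial conclusions survive each successive subtraction of an inner derivation.
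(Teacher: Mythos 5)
Your proposal is correct and follows the same skeleton as the paper's proof: the $2\times2$ block decomposition of Proposition~2.0, Lemma~\ref{l1} applied to the $(1,1)$-compression to handle the invertible corner, Lemma~\ref{l3} to pass from invertible to arbitrary elements, and the Jing--Lu--Li theorem to upgrade the $(2,2)$-compression from ``derivable at $0$ with value $0$ at the identity'' to a derivation. The organizational differences are real and worth noting, though. First, your single master factorization $\left[\begin{smallmatrix}A&B\\0&C_{1}\end{smallmatrix}\right]\left[\begin{smallmatrix}A^{-1}\Omega_{0}&-A^{-1}BC_{2}\\0&C_{2}\end{smallmatrix}\right]=\Omega$ with $C_{1}C_{2}=0$ subsumes the separate factorizations of the paper's Steps 1--5 (each of the paper's choices of $S,T$ is a specialization of it), which makes the case analysis more transparent. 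Second, and more substantively, your endgame is lighter: the paper's Step 5 establishes the module relation for $B_{12}$ by writing rank-one operators as combinations of idempotents, invoking Erd\"{o}s density, then Lemma~\ref{l2}(1), and finally the triviality of the commutant of a nest algebra to pin down the scalar $\lambda$; you instead get $k(AB)=Ak(B)$ for invertible $A$ directly from the master identity (once $\theta(I_{M^{\perp}})=0$ is in hand), extend by Lemma~\ref{l3}, and close with a direct verification of the Leibniz rule --- this bypasses Lemma~\ref{l2}, the idempotent/density machinery, and the commutant argument, at the cost of proving only that $\varphi$ is a derivation rather than exhibiting it as inner (which is all the theorem requires). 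You also correctly isolate the one genuinely delicate point, namely that the vanishing $(2,2)$-block of $\Omega$ only yields $\theta$ derivable at $0$, and that the mixed relation $k(BC)=k(B)C+B\theta(C)$ is what supplies the normalization $\theta(I_{M^{\perp}})=0$. The remaining caveats are bookkeeping ones you already flag: the deductions must be ordered so that, e.g., $A_{22}=B_{22}=0$ and the formulas for $A_{12},C_{12}$ are available before the $(1,2)$- and $(2,2)$-blocks of the master identity are read off, and Lemma~\ref{l3} is being applied (as the paper also implicitly does) to linear maps whose range need not lie in the domain algebra, which its proof permits.
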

\begin{proof} Let $\varphi$ be a strong operator topology continuous
derivable linear mapping at $\Omega$ from $alg\mathscr{N}$ into
itself. We only need to show that $\varphi$ is a derivation. For
arbitrary $X$ in $alg\mathscr{N}_{M}$, $Y$ in $B(M^{\bot},M)$ and
$Z$ in $alg\mathscr{N}_{M^{\bot}}$, we write
$$\left\{\begin{array}{ccc}
          \varphi(\left[\begin{array}{ccc}X&0\\0&0\end{array}\right])&=&\left[\begin{array}{ccc}A_{11}(X)&A_{12}(X)\\0&A_{22}(X)\end{array}\right],\\
          \varphi(\left[\begin{array}{ccc}0&Y\\0&0\end{array}\right])&=&\left[\begin{array}{ccc}B_{11}(Y)&B_{12}(Y)\\0&B_{22}(Y)\end{array}\right],\\
          \varphi(\left[\begin{array}{ccc}0&0\\0&Z\end{array}\right])&=&\left[\begin{array}{ccc}C_{11}(Z)&C_{12}(Z)\\0&C_{22}(Z)\end{array}\right].
        \end{array}
\right.$$ Obviously, $A_{ij},B_{ij}$ and $C_{ij}(i,j=1,2,i\leq j)$
are strong operator topology continuous linear mappings on
$alg\mathscr{N}_{M},\ B(M^{\bot},M)$, and
$alg\mathscr{N}_{M^{\bot}}$, respectively. \\
\textsl{Case 1.} Suppose that $\widehat{M}=M$. Then $\Omega$ may be
represented as the following matrix relative to the orthogonal
decomposition $H=M\oplus M^{\bot}$:
$$\Omega=\left[\begin{array}{ccc}W&0\\0&0\end{array}\right],$$
where $W$ is an invertible operator in $alg\mathscr{N}_{M}$.
The proof are divided into the following five steps:\\

\emph{Step 1.} For arbitrary $X_{1},X_{2}$ in $alg\mathscr{N}_M$
with $X_{1}X_{2}=I_{M}$, taking $S=\left[\begin{array}{ccc}W
X_{1}&0\\0&0\\\end{array}\right]$ and $T=\left[\begin{array}{ccc}
X_{2}&0\\0&0\\\end{array}\right]$, then $ST=\Omega$. Since $\varphi$
is a derivable mapping at $\Omega$ on $alg\mathscr{N}$, we have
\begin{eqnarray*}
& &\left[\begin{array}{ccc}A_{11}(W)
&A_{12}(W)\\0&A_{22}(W)\\\end{array}\right]=\varphi(\Omega)=\varphi(S)T+S\varphi(T)\\
&=&\left[\begin{array}{ccc}A_{11}(W X_{1}) &A_{12}(W
X_{1})\\0&A_{22}(W
X_{1})\\\end{array}\right]\left[\begin{array}{ccc}
X_{2}&0\\0&0\\\end{array}\right]+\left[\begin{array}{ccc}W
X_{1}&0\\0&0\\\end{array}\right]\left[\begin{array}{ccc}A_{11}(X_{2})
&A_{12}(X_{2})\\0&A_{22}(X_{2})\\\end{array}\right]\\
&=&\left[\begin{array}{ccc}A_{11}(W X_{1})X_{2}+W
X_{1}A_{11}(X_{2})&W X_{1}A_{12}(X_{2})\\0&0\end{array}\right].
\end{eqnarray*}
Furthermore,
\begin{eqnarray}
A_{11}(W)&=&A_{11}(W X_{1})X_{2}+WX_{1}A_{11}(X_{2}),\label{eqn1}\\
A_{12}(W)&=&WX_{1}A_{12}(X_{2}),\label{eqn2}\\\nonumber
A_{22}(W)&=&0
\end{eqnarray}
for any $X_{1},X_{2}$ in $alg\mathscr{N}_M$ with $X_{1}X_{2}=I_{M}$.
By Lemma \ref{l1}, we get that $A_{11}$ is an inner derivation on
$alg\mathscr{N}_{M}$. Then there exists an operator $A\in
alg\mathscr{N}_{M}$ such that
\begin{eqnarray}
A_{11}(X)=XA-AX\label{eqn8}
\end{eqnarray}
for any $X$ in $alg\mathscr{N}_M$.\par For an arbitrary invertible
operator $X$ in $alg\mathscr{N}_M$, putting $X_{2}=X,X_{1}=X^{-1}$
in Eq.~(\ref{eqn2}), then we get that $A_{12}(W)=W X^{-1}A_{12}(X)$,
i.e., $W ^{-1} A_{12}(W)=X^{-1}A_{12}(X)$. Taking $X=I_{M}$, we have
$W^{-1}A_{12}(W)=A_{12}(I_{M})$. Thus we get that
$$A_{12}(X)=XA_{12}(I_M)$$ for any invertible operator $X$ in
$alg\mathscr{N}_M$. It follows from Lemma \ref{l3} that
$A_{12}(X)=XA_{12}(I_M)$ for any operator $X$ in $alg\mathscr{N}_M$.
If we write $B$ for $A_{12}(I_M)$, then
\begin{eqnarray}
A_{12}(X)=XB\label{eqn9}
\end{eqnarray}
for any $X$ in $alg\mathscr{N}_M$.\\

\emph{Step 2.} For arbitrary $Z_{1},Z_{2}$ in
$alg\mathscr{N}_{M^{\bot}}$ with $Z_{1}Z_{2}=0$ and $X_{1},X_{2}$ in
$alg\mathscr{N}_M$ with $X_{1}X_{2}=I_M$, taking
$S=\left[\begin{array}{ccc}W X_{1}&0\\0&Z_{1}\\\end{array}\right]$
and $T=\left[\begin{array}{ccc}
X_{2}&0\\0&Z_{2}\\\end{array}\right]$, then $ST=\Omega$. Thus we
have
\begin{eqnarray*}
& &\left[\begin{array}{ccc}A_{11}(W)
&A_{12}(W)\\0&A_{22}(W)\\\end{array}\right]=\varphi(\Omega)=\varphi(S)T+S\varphi(T)\\
&=&\left[\begin{array}{ccc}A_{11}(WX_{1})+C_{11}(Z_{1})&A_{12}(WX_{1})+C_{12}(Z_{1})\\0&A_{22}(W
X_{1})+C_{22}(Z_{1})\\\end{array}\right]\left[\begin{array}{ccc}
X_{2}&0\\~0&Z_{2}\\\end{array}\right]\\& &+\left[\begin{array}{ccc}W
X_{1}&0\\~~0&Z_{1}\\\end{array}\right]\left[\begin{array}{ccc}A_{11}(X_{2})+C_{11}(Z_{2})
&A_{12}(X_{2})+C_{12}(Z_{2})\\0&A_{22}(X_{2})+C_{22}(Z_{2})\\\end{array}\right]\\
&=&\left[\begin{array}{ccc}A_{11}(W
X_{1})X_{2}+C_{11}(Z_{1})X_{2}&A_{12}(W
X_{1})Z_{2}+C_{12}(Z_{1})Z_{2}\\+W X_{1}A_{11}(X_{2})+W
X_{1}C_{11}(Z_{2})&+W X_{1}A_{12}(X_{2})+W X_{1}C_{12}(Z_{2})\\
~&~\\
~&A_{22}(WX_{1})Z_{2}+C_{22}(Z_{1})Z_{2}\\0&+Z_{1}A_{22}(X_{2})+Z_{1}C_{22}(Z_{2})\end{array}\right].
\end{eqnarray*}
Furthermore,
\begin{eqnarray}
A_{11}(W)&=&A_{11}(W X_{1})X_{2}+C_{11}(Z_{1})X_{2}+W
X_{1}A_{11}(X_{2})+W
X_{1}C_{11}(Z_{2}),\label{eqn10}\\
A_{12}(W)&=&A_{12}(W X_{1})Z_{2}+C_{12}(Z_{1})Z_{2}+W
X_{1}A_{12}(X_{2})+W
X_{1}C_{12}(Z_{2}),\label{eqn11}\\
A_{22}(W)&=&A_{22}(W
X_{1})Z_{2}+C_{22}(Z_{1})Z_{2}+Z_{1}A_{22}(X_{2})+Z_{1}C_{22}(Z_{2})\label{eqn12}
\end{eqnarray}
for any $Z_{1},Z_{2}$ in $alg\mathscr{N}_{M^{\bot}}$ with
$Z_{1}Z_{2}=0$ and $X_{1},X_{2}$ in $alg\mathscr{N}_M$ with
$X_{1}X_{2}=I_M$. Substituting the expression of $A_{11}(W)$ in
Eq.~(\ref{eqn1}) into Eq.~(\ref{eqn10}), and the expression of
$A_{12}(W)$ in Eq.~(\ref{eqn9}) into Eq.~(\ref{eqn11}),
respectively, we have
\begin{eqnarray}
0&=&C_{11}(Z_{1})X_{2}+W X_{1}C_{11}(Z_{2}),\label{eqn13}\\
W B&=&W X_{1}BZ_{2}+C_{12}(Z_{1})Z_{2}+W B+W
X_{1}C_{12}(Z_{2})\label{eqn14}
\end{eqnarray}
for any $Z_{1},Z_{2}$ in $alg\mathscr{N}_{M^{\bot}}$ with
$Z_{1}Z_{2}=0$ and $X_{1},X_{2}$ in $alg\mathscr{N}_M$ with
$X_{1}X_{2}=I_M$. For an arbitrary $Z$ in
$alg\mathscr{N}_{M^{\bot}}$, Putting $Z_{1}=0$ and $Z_{2}=Z$, it
follows from Eq.~(\ref{eqn13}) and Eq.~(\ref{eqn14}) that
\begin{eqnarray*}
C_{11}(Z)&=&0\label{eqn15}
\end{eqnarray*}
and
\begin{eqnarray*}
C_{12}(Z)&=&-BZ\label{eqn16}
\end{eqnarray*}
for any $Z$ in $alg\mathscr{N}_{M^{\bot}}$. Taking
$Z_{1}=I_{M^{\bot}}$ and $Z_{2}=0$ in Eq.~(\ref{eqn12}), we get that
$A_{22}(X_{2})=A_{22}(W)$ for any $X_{2}$ in $alg\mathscr{N}_M$.
Furthermore, $A_{22}(X)=0$ for any invertible operator $X$ in
$alg\mathscr{N}_M$. It follows from Lemma \ref{l2} that
\begin{eqnarray}
A_{22}(X)=0\label{eqn17}
\end{eqnarray}
for any $X$ in $alg\mathscr{N}_M$. Using Eq.~(\ref{eqn17}) and
Eq.~(\ref{eqn12}), we get that
$C_{22}(Z_{1})Z_{2}+Z_{1}C_{22}(Z_{2})=0$, namely $C_{22}$ is a
derivable mapping at $0$.\\

\emph{Step 3.} For arbitrary $Y$ in $B(M^{\bot},M)$ and
$X_{1},X_{2}$ in $alg\mathscr{N}_M$ with $X_{1}X_{2}=I_M$, taking
$S=\left[\begin{array}{ccc}W X_{1}&Y\\0&0\\\end{array}\right]$ and
$T=\left[\begin{array}{ccc} X_{2}&0\\0&0\\\end{array}\right]$, then
$ST=\Omega$. Thus we have
\begin{eqnarray*}
& &\left[\begin{array}{ccc}A_{11}(W)
&A_{12}(W)\\0&0\\\end{array}\right]=\varphi(\Omega)=\varphi(S)T+S\varphi(T)\\
&=&\left[\begin{array}{ccc}A_{11}(W X_{1})+B_{11}(Y)&A_{12}(W
X_{1})+B_{12}(Y)\\0&B_{22}(Y)\\\end{array}\right]\left[\begin{array}{ccc}
X_{2}&0\\0&0\\\end{array}\right]\\& &+\left[\begin{array}{ccc}W
X_{1}&Y\\0&0\\\end{array}\right]\left[\begin{array}{ccc}A_{11}(X_{2})
&A_{12}(X_{2})\\0&0\\\end{array}\right]\\
&=&\left[\begin{array}{ccc}A_{11}(W X_{1})X_{2}+B_{11}(Y)X_{2}+W
X_{1}A_{11}(X_{2})&W X_{1}A_{12}(X_{2})\\0&0\end{array}\right].
\end{eqnarray*}
Furthermore,
\begin{eqnarray*}
A_{11}(W)=A_{11}(W X_{1})X_{2}+B_{11}(Y)X_{2}+W
X_{1}A_{11}(X_{2}).\label{eqn18}
\end{eqnarray*}
Since $A_{11}$ is a inner derivation and $X_{2}$ is an invertible
operator in $alg\mathscr{N}_M$, we have
\begin{eqnarray*}
B_{11}(Y)=0\label{eqn19}
\end{eqnarray*}
for any $Y$ in $B(M^{\bot},M)$.\\

\emph{Step 4.} For arbitrary $Y$ in $B(M^{\bot},M)$, $Z$ in
$alg\mathscr{N}_{M^{\bot}}$, taking $S=\left[\begin{array}{ccc}
I_{M}&Y\\0&0\\\end{array}\right]$ and $T=\left[\begin{array}{ccc} W
&-YZ\\0&Z\\\end{array}\right]$, then $ST=\Omega$. Thus we have
\begin{eqnarray*}
& &\left[\begin{array}{ccc}A_{11}(W)
&A_{12}(W)\\0&0\\\end{array}\right]=\varphi(\Omega)=\varphi(S)T+S\varphi(T)\\
&=&\left[\begin{array}{ccc}0&A_{12}(I_{M})+B_{12}(Y)\\0&B_{22}(Y)\\\end{array}\right]\left[\begin{array}{ccc}
W &-YZ\\0&Z\\\end{array}\right]\\&
&+\left[\begin{array}{ccc}I_{M}&Y\\0&0\\\end{array}\right]\left[\begin{array}{ccc}A_{11}(W)
&A_{12}(W)-B_{12}(YZ)+C_{12}(Z)\\0&C_{22}(Z)-B_{22}(YZ)\\\end{array}\right]\\
&=&\left[\begin{array}{ccc}A_{11}(W)&(A_{12}(I_{M})+B_{12}(Y))Z+A_{12}(W)-B_{12}(YZ)\\
~&+C_{12}(Z)+Y(C_{22}(Z)-B_{22}(YZ))\\
~&~\\0&B_{22}(Y)Z\end{array}\right].
\end{eqnarray*}
Furthermore,
\begin{eqnarray}
\nonumber A_{12}(W)&=&(A_{12}(I_{M})+B_{12}(Y))Z+A_{12}(W)\\
&&-B_{12}(YZ)+C_{12}(Z)+Y(C_{22}(Z)-B_{22}(YZ)),\label{eqn20}\\
0&=&B_{22}(Y)Z\label{eqn21}
\end{eqnarray}
for any $Y$ in $B(M^{\bot},M)$ and $Z$ in
$alg\mathscr{N}_{M^{\bot}}$. Putting $Z=I_{M^{\bot}}$ in
Eq.~(\ref{eqn21}), we have
\begin{eqnarray*}
B_{22}(Y)=0\label{eqn22}
\end{eqnarray*}
for any $Y$ in $B(M^{\bot},M)$. Taking $Z=I_{M^{\bot}}$ in
Eq.~(\ref{eqn20}), we get
$A_{12}(I_{M^{\bot}})+C_{12}(I_{M^{\bot}})+YC_{22}(I_{M^{\bot}})=0$
for any $Y\in B(M^{\bot},M)$. So $C_{22}(I_{M^{\bot}})=0$. Since
$C_{22}$ is a derivable mapping at $0$, $C_{22}$ is a derivation on
$alg\mathscr{N}_{M^{\bot}}$(see \cite{Jing}). Thus $C_{22}$ is
inner, and so there is an operator $C\in alg\mathscr{N}_{M^{\bot}}$
such that
\begin{eqnarray*}
C_{22}(Z)=ZC-CZ\label{eqn23}
\end{eqnarray*}
for any $Z$ in $alg\mathscr{N}_{M^{\bot}}$.\\

\emph{Step 5.} For arbitrary idempotent $Q$ in $alg\mathscr{N}_M$
and $Y$ in $B(M^{\bot},M)$, we write $Q_{\lambda}$ for $Q+\lambda
I_{M}$. Obviously there exist two complex numbers
${\lambda}_{1},{\lambda}_{2}$ such that
${\lambda}_{1}+{\lambda}_{2}=-{\lambda}_{1}{\lambda}_{2}=-1$. So
$Q_{{\lambda}_{1}}Q_{{\lambda}_{2}}=Q_{{\lambda}_{2}}Q_{{\lambda}_{1}}=I_{M}$
and $Q_{{\lambda}_{1}}+Q_{{\lambda}_{2}}=2Q-I_{M}$. Taking
$S=\left[\begin{array}{ccc}W Q_{{\lambda}_{1}}&-W
Q_{{\lambda}_{1}}Y\\0&0\\\end{array}\right]$ and
$T=\left[\begin{array}{ccc}
Q_{{\lambda}_{2}}&Y\\~0&I_{M^{\bot}}\\\end{array}\right]$, then
$ST=\Omega$. Thus we have
\begin{eqnarray*}
& &\left[\begin{array}{ccc}A_{11}(W)
&A_{12}(W)\\0&0\\\end{array}\right]=\varphi(\Omega)=\varphi(S)T+S\varphi(T)\\
&=&\left[\begin{array}{ccc}A_{11}(W Q_{{\lambda}_{1}})&A_{12}(W
Q_{{\lambda}_{1}})-B_{12}(W
Q_{{\lambda}_{1}}Y)\\0&0\\\end{array}\right]\left[\begin{array}{ccc}
Q_{{\lambda}_{2}}&Y\\~0&I_{M^{\bot}}\\\end{array}\right]\\&
&+\left[\begin{array}{ccc}W Q_{{\lambda}_{1}}&-W
Q_{{\lambda}_{1}}Y\\0&0\\\end{array}\right]\left[\begin{array}{ccc}A_{11}(Q_{{\lambda}_{2}})
&A_{12}(Q_{{\lambda}_{2}})+B_{12}(Y)+C_{12}(I_{M^{\bot}})\\0&0\\\end{array}\right]\\
&=&\left[\begin{array}{ccc}A_{11}(W
Q_{{\lambda}_{1}})Q_{{\lambda}_{2}}+W
Q_{{\lambda}_{1}}A_{11}(Q_{{\lambda}_{2}})&A_{11}(W
Q_{{\lambda}_{1}})Y+A_{12}(W Q_{{\lambda}_{1}})-B_{12}(W
Q_{{\lambda}_{1}}Y)\\&+W
Q_{{\lambda}_{1}}A_{12}(Q_{{\lambda}_{2}})+W
Q_{{\lambda}_{1}}B_{12}(Y)+W Q_{{\lambda}_{1}}C_{12}(I_{M^{\bot}})\\
~&~\\0&0\end{array}\right].
\end{eqnarray*}
Furthermore,
\begin{eqnarray}
\nonumber A_{12}(W)&=&A_{11}(W Q_{{\lambda}_{1}})Y+A_{12}(W
Q_{{\lambda}_{1}})-B_{12}(W Q_{{\lambda}_{1}}Y)\\&&+W
Q_{{\lambda}_{1}}A_{12}(Q_{{\lambda}_{2}})+W
Q_{{\lambda}_{1}}B_{12}(Y)+W
Q_{{\lambda}_{1}}C_{12}(I_{M^{\bot}}).\label{eqn24}
\end{eqnarray}
Interchanging the position of ${\lambda}_{1}$ and ${\lambda}_{2}$ in
Eq.~(\ref{eqn24}), we have
\begin{eqnarray}
\nonumber A_{12}(W)&=&A_{11}(W Q_{{\lambda}_{2}})Y+A_{12}(W
Q_{{\lambda}_{2}})-B_{12}(W Q_{{\lambda}_{2}}Y)\\&&+W
Q_{{\lambda}_{2}}A_{12}(Q_{{\lambda}_{1}})+W
Q_{{\lambda}_{2}}B_{12}(Y)+W
Q_{{\lambda}_{2}}C_{12}(I_{M^{\bot}}).\label{eqn25}
\end{eqnarray}
Subtracting Eq.~(\ref{eqn25}) from Eq.~(\ref{eqn24}), we have
$$A_{11}(W)Y-B_{12}(W Y)+W B_{12}(Y)=0.$$ Adding Eq.~(\ref{eqn24}) to
Eq.~(\ref{eqn25}), we have $$2[A_{11}(W Q)Y-B_{12}(W QY)+W
QB_{12}(Y)]-[A_{11}(W)Y-B_{12}(W Y)+W B_{12}(Y)]=0.$$ It follows
that
$$A_{11}(W Q)Y-B_{12}(W QY)+W QB_{12}(Y)=0.$$
Since every rank one operator in $alg\mathscr{N}_M$ can be
represented as a linear combination of at most four idempotents in
$alg\mathscr{N}_M$(see \cite{Hadwin}), we get that the above
equation is valid for each rank-one operator in $alg\mathscr{N}_M$.
Furthermore, it is valid for every finite rank operator in
$alg\mathscr{N}_M$(see \cite{Erdos}). Therefore, by the Erd\"{o}s
Density Theorem(see \cite{Erdos}), we have
$$A_{11}(W X)Y-B_{12}(W XY)+W XB_{12}(Y)=0$$ for any $X$ in
$alg\mathscr{N}_M$ and $Y$ in $B(M^{\bot},M)$. If we take $X$ in
$alg\mathscr{N}_M$ and $Y$ in $B(M^{\bot},M)$ in the above equation
such that $XY=0$, from Eq.~(\ref{eqn8}) we can get
$$(W XA-AW X)Y+W XB_{12}(Y)=0.$$That is, $X(AY+B_{12}(Y))=0$. By Lemma \ref{l2} (1), we can pick an operator $G$ from $B(M^{\bot})$ such
that $AY+B_{12}(Y)=YG$, i.e., $B_{12}(Y)=YG-AY$ for any $Y$ in
$B(M^{\bot},M)$. Substituting the expressions of $A_{12}$, $B_{12}$,
$C_{12}$, $B_{22}$ and $C_{22}$ into Eq.~(\ref{eqn20}), we can
obtain that $Y[(G-C)Z-Z(G-C)]=0$ for any $Y$ in $B(M^{\bot},M)$ and
$Z$ in $alg\mathscr{N}_{M^{\bot}}$. Thus $G-C$ in
$(alg\mathscr{N}_{M^{\bot}})'$. Thus there exists a complex number
$\lambda$ such that $G-C=-\lambda I_{M^{\bot}}$(The commutant of
nest algebra is trivial.). Finally, we can obtain that
$B_{12}(Y)=Y(C-\lambda I_{M^{\bot}})-AY$. That is,
\begin{eqnarray*}
B_{12}(Y)=YC-AY-\lambda Y\label{eqn26}
\end{eqnarray*}
for any $Y$ in $B(M^{\bot},M)$.\par In summary, we get that
\begin{eqnarray*}
\varphi(\left[\begin{array}{ccc}X&0\\0&0\end{array}\right])&=&\left[\begin{array}{ccc}A_{11}(X)&A_{12}(X)\\0&A_{22}(X)\end{array}\right]=\left[\begin{array}{ccc}XA-AX&XB\\0&0\end{array}\right]\\
&=&\left[\begin{array}{ccc}X&0\\0&0\end{array}\right]\left[\begin{array}{ccc}A&B\\0&C\end{array}\right]-\left[\begin{array}{ccc}A&B\\0&C\end{array}\right]\left[\begin{array}{ccc}X&0\\0&0\end{array}\right],\\
\varphi(\left[\begin{array}{ccc}0&Y\\0&0\end{array}\right])&=&\left[\begin{array}{ccc}B_{11}(Y)&B_{12}(Y)\\0&B_{22}(Y)\end{array}\right]=\left[\begin{array}{ccc}0&YC-AY-\lambda Y\\0&0\end{array}\right]\\
&=&\left[\begin{array}{ccc}0&Y\\0&0\end{array}\right]\left[\begin{array}{ccc}A&B\\0&C\end{array}\right]-\left[\begin{array}{ccc}A&B\\0&C\end{array}\right]\left[\begin{array}{ccc}0&Y\\0&0\end{array}\right]-\lambda \left[\begin{array}{ccc}0&Y\\0&0\end{array}\right],\\
\varphi(\left[\begin{array}{ccc}0&0\\0&Z\end{array}\right])&=&\left[\begin{array}{ccc}C_{11}(Z)&C_{12}(Z)\\0&C_{22}(Z)\end{array}\right]=\left[\begin{array}{ccc}0&-BZ\\0&ZC-CZ\end{array}\right]\\
&=&\left[\begin{array}{ccc}0&0\\0&Z\end{array}\right]\left[\begin{array}{ccc}A&B\\0&C\end{array}\right]-\left[\begin{array}{ccc}A&B\\0&C\end{array}\right]\left[\begin{array}{ccc}0&0\\0&Z\end{array}\right]
\end{eqnarray*}
for any $X$ in $alg\mathscr{N}_{M}$, $Y$ in $B(M^{\bot},M)$ and $Z$
in $alg\mathscr{N}_{M^{\bot}}$. Hence we obtain that
\begin{eqnarray*}
\varphi(\left[\begin{array}{ccc}X&Y\\0&Z\end{array}\right])&=&\left[\begin{array}{ccc}X&Y\\0&Z\end{array}\right]\left[\begin{array}{ccc}A&B\\0&C\end{array}\right]-\left[\begin{array}{ccc}A&B\\0&C\end{array}\right]\left[\begin{array}{ccc}X&Y\\0&Z\end{array}\right]-\lambda
\left[\begin{array}{ccc}0&Y\\0&0\end{array}\right]\\&=&\left[\begin{array}{ccc}X&Y\\0&Z\end{array}\right]\left[\begin{array}{ccc}A+\frac{1}{2}\lambda
I_{M}&B\\0&C-\frac{1}{2}\lambda
I_{M^{\bot}}\end{array}\right]\\&&-\left[\begin{array}{ccc}A+\frac{1}{2}\lambda
I_{M}&B\\0&C-\frac{1}{2}\lambda
I_{M^{\bot}}\end{array}\right]\left[\begin{array}{ccc}X&Y\\0&Z\end{array}\right].
\end{eqnarray*}
Thus $\varphi$ is an inner derivation.\\~\\
\textsl{Case 2.} $\widehat{M}=M^{\bot}$. Then $\Omega$ may be
represented as the following operator matrices relative to the
orthogonal decomposition $H=M\oplus M^{\bot}$:
$$\Omega=\left[\begin{array}{ccc}0&0\\0&W\end{array}\right],$$ where $W$ is an invertible operator in $alg\mathscr{N}_{M^{\bot}}$.
Since the proof is similar to case 1, the sketch of the proof is
given below. The proof is divided into the following six steps:\\

\emph{Step 1.} For arbitrary $Z_{1}, Z_{2}$ in
$alg\mathscr{N}_{M^{\bot}}$ with $Z_{1}Z_{2}=I_{M^{\bot}}$, taking
$S=\left[\begin{array}{ccc} 0&0\\0&WZ_{1}\\\end{array}\right]$ and
$T=\left[\begin{array}{ccc} 0&0\\0&Z_{2}\\\end{array}\right]$, then
$ST=\Omega$. Since $\varphi$ is derivable at $\Omega$, by imitating
the proof of Case 1, we get that $C_{12}(Z)=B'Z$ for any $Z$ in
$alg\mathscr{N}_{M^{\bot}}$, where $B'=C_{12}(I_{M^{\bot}})$. It
follows from Lemma \ref{l1} that there exists an operator $C'$ in
$B(M^{\bot})$ such that $C_{22}(Z)=ZC'-C'Z$ for any $Z$ in
$alg\mathscr{N}_{M^{\bot}}$.\\

\emph{Step 2.} For arbitrary $Z_{1}, Z_{2}$ in
$alg\mathscr{N}_{M^{\bot}}$ with $Z_{1}Z_{2}=I_{M^{\bot}}$ and
$X_{1},X_{2}$ in $alg\mathscr{N}_M$ with $X_{1}X_{2}=0$, taking
$S=\left[\begin{array}{ccc} X_{1}&0\\0&WZ_{1}\\\end{array}\right]$
and
$T=\left[\begin{array}{ccc}X_{2}&0\\0&Z_{2}\\\end{array}\right]$,
then $ST=\Omega$. By Lemma \ref{l3} and imitating the proof of case
1, we may get that $C_{11}(Z)=0$ for any $Z$ in
$alg\mathscr{N}_{M^{\bot}}$. Since $C_{11}$ vanishes on
$alg\mathscr{N}_{M^{\bot}}$, we obtain that $A_{11}$ is derivable at
$0$. It follows from the expression of $C_{12}$ that
$A_{12}(X)=-XB'$ for any $X$ in $alg\mathscr{N}_M$. We also get that
$A_{22}(X)=0$ for any $X$ in $alg\mathscr{N}_M$.\\
\emph{Step 3.} For arbitrary $Z_{1}, Z_{2}$ in
$alg\mathscr{N}_{M^{\bot}}$ with $Z_{1}Z_{2}=I_{M^{\bot}}$ and $Y$
in $B(M^{\bot},M)$, taking $S=\left[\begin{array}{ccc}
0&0\\0&WZ_{1}\\\end{array}\right]$ and
$T=\left[\begin{array}{ccc}0&Y\\0&Z_{2}\\\end{array}\right]$, then
$ST=\Omega$. Furthermore, we get that $B_{22}(Y)=0$ for any $Y$ in
$B(M^{\bot},M)$.\\
\emph{Step 4.} For an arbitrary $Y$ in $B(M^{\bot},M)$, taking
$S=\left[\begin{array}{ccc}
I_{M}&-YW^{-1}\\0&I_{M^{\bot}}\\\end{array}\right]$ and
$T=\left[\begin{array}{ccc}0&Y\\0&W\\\end{array}\right]$, then
$ST=\Omega$. It follows that $B_{11}(Y)=0$ for any $Y$ in
$B(M^{\bot},M)$.\\
\emph{Step 5.} For arbitrary idempotent $Q'$ in
$alg\mathscr{N}_{M^{\bot}}$ and $Y$ in $B(M^{\bot},M)$, we write
$Q'_{\lambda}$ for $Q'+\lambda I_{M^{\bot}}$. Then there exist two
complex numbers ${\lambda}_{1},{\lambda}_{2}$ such that
${\lambda}_{1}+{\lambda}_{2}=-{\lambda}_{1}{\lambda}_{2}=-1$. So
$Q'_{{\lambda}_{1}}Q'_{{\lambda}_{2}}=Q'_{{\lambda}_{2}}Q'_{{\lambda}_{1}}=I_{M^{\bot}}$
and $Q'_{{\lambda}_{1}}+Q'_{{\lambda}_{2}}=2Q'-I_{M^{\bot}}$. Taking
$S=\left[\begin{array}{ccc}I_{M}&Y\\0&WQ'_{{\lambda}_{1}}\end{array}\right]$
and
$T=\left[\begin{array}{ccc}0&-YQ'_{{\lambda}_{2}}\\0&Q'_{{\lambda}_{2}}\end{array}\right]$,
then $ST=\Omega$. It follows that $A_{11}(I_M)=0$.  Since $A_{11}$
is derivable at $0$, there is an operator $A'$ in $B(M)$ such that
$A_{11}(X)=XA'-A'X$ for any $X$ in $alg\mathscr{N}_{M}$. We get that
$B_{12}(Y)Q'-B_{12}(YQ')+YC_{22}(Q')=0$ for any idempotent $Q'$ in
$alg\mathscr{N}_{M^{\bot}}$. Furthermore,
$B_{12}(Y)Z-B_{12}(YZ)+YC_{22}(Z)=0$ for any $Z$ in
$alg\mathscr{N}_{M^{\bot}}$. If we take $YZ=0$, then
$B_{12}(Y)Z+YC_{22}(Z)=0$. By Lemma \ref{l2}(2) and the expression
of $C_{22}$, we may find an operator $D'$ in $B(M)$ such that
$B_{12}(Y)-YC'=D'Y$ for any $Y$ in $B(M^{\bot},M)$. That is
$B_{12}(Y)=YC'+D'Y$ for any $Y$ in $B(M^{\bot},M)$. \\
\emph{Step 6.} For arbitrary $X$ in $alg\mathscr{N}_{M}$ and $Y$ in
$B(M^{\bot},M)$, take $S=\left[\begin{array}{ccc}
X&-XY\\0&W\\\end{array}\right]$ and
$T=\left[\begin{array}{ccc}0&Y\\0&I_{M^{\bot}}\\\end{array}\right]$
, then $ST=\Omega$. It follows from $B'=C_{12}(I_{M^{\bot}})$ and
the expression of $A_{12}$ that $A_{11}(X)Y-B_{12}(XY)+XB_{12}(Y)=0$
and $A'+D'$ in $(alg\mathscr{N}_M)'$(see \cite{Davidson}). Hence
there exists a complex number $\lambda '$ such that $A'+D'=-\lambda
'I_M$. It follows that $B_{12}(Y)=YC'-A'Y-\lambda 'Y$ for any $Y$ in
$B(M^{\bot},M)$. Thus we have
\begin{eqnarray*}
\varphi(\left[\begin{array}{ccc}X&Y\\0&Z\end{array}\right])&=&\left[\begin{array}{ccc}X&Y\\0&Z\end{array}\right]\left[\begin{array}{ccc}A'&-B'\\0&C'\end{array}\right]-\left[\begin{array}{ccc}A'&-B'\\0&C'\end{array}\right]\left[\begin{array}{ccc}X&Y\\0&Z\end{array}\right]-\lambda'
\left[\begin{array}{ccc}0&Y\\0&0\end{array}\right]\\&=&\left[\begin{array}{ccc}X&Y\\0&Z\end{array}\right]\left[\begin{array}{ccc}A'+\frac{1}{2}\lambda'
I_{M}&-B'\\0&C'-\frac{1}{2}\lambda'
I_{M^{\bot}}\end{array}\right]\\&&-\left[\begin{array}{ccc}A'+\frac{1}{2}\lambda'
I_{M}&-B'\\0&C'-\frac{1}{2}\lambda'
I_{M^{\bot}}\end{array}\right]\left[\begin{array}{ccc}X&Y\\0&Z\end{array}\right].
\end{eqnarray*}\\
Thus $\varphi$ is an inner derivation. This completes the proof.
\end{proof}

\subsection*{Acknowledgement.}
The authors wish to give their thanks to the referees for  helpful
comments and suggestions to improve the original manuscript.

\end{document}